\documentclass[10pt]{article}
\usepackage{amsmath, amsthm, amssymb,todonotes, xcolor}
\usepackage{graphicx,float,authblk,mathtools,relsize}
\usepackage[skip=2pt,font=scriptsize]{caption}
\usepackage[all]{xy}
\xyoption{all}
\topmargin  = -1cm
\textwidth  = 16cm \textheight = 24cm
\voffset-1cm
\hoffset=-2cm
\parindent  = 0pt
\pdfpageheight 11in
\pagestyle{empty}

\newtheorem{corollary}{Corollary}
\newtheorem{proposition}{Proposition}

\newtheorem{theorem}{Theorem}
\newtheorem{claim}{Claim}

\newtheorem*{theorem*}{Theorem}
\newtheorem*{thmA}{Theorem A}
\newtheorem{definition}{Definition}

\newcommand\ov{\overline}

\mathchardef\mhyphen="2D



\begin{document}

\title{Non-convergence of the spherical harmonic expansion of gravitational potential below the Brillouin sphere; the continuous case}
\author{C. Ogle$^1$, O. Costin$^1$, M. Bevis$^2$}
\date{$^1$ Department of Mathematics, The Ohio State University\\$^2$ Division of Geodetic Science, The Ohio State University\\ \today}
\maketitle

\begin{abstract} For a singleton planet $P$ with gravitational potential $V$, we show that for each $\varepsilon > 0$ there exists a planet $P'$ with gravitational potential $V'$, with $(P',V')$ ``$\varepsilon$-close" to $(P,V)$ (in an appropriate $C^0$-sense) for which the spherical harmonic expansion of $V'$ does not extend more than a distance $\varepsilon$ below the Brillouin sphere of $P'$.  
\end{abstract}

\section*{Introduction} A central problem in geodesy involves computing the gravitational potential $V$ (or its radial derivative gravity) on and above the topography of the Earth based on an initial discrete and finite set of given measurements. The standard method for doing this has involved using that initial data to compute the coefficients of the spherical harmonic expansion of $V$ (which we will denote by $SHE(V)$). The main drawback with this approach is that this expansion is represented by a series which is only guaranteed to converge outside of the Brillouin Sphere \cite{ccob}. A fundamental open question regarding this has been: does $SHE(V)$ converge in the free space within the Brillouin sphere - the smallest sphere which circumscribes the condensed matter that comprises the planet - and, if so, where precisely does convergence occur \cite{tk, hm}?
\vskip.2in
More generally, given some $\varepsilon > 0$, we can consider the weaker property that $SHE(V)$ converges within an $\varepsilon$-neighborhood of the Brillouin sphere $S$, or equivalently that the $SHE(V)$ series descends at least a distance $\varepsilon$ below $S$. If it does this, we say the planet has {\it $\varepsilon$-descent property}. Now the set of planets in $\mathbb R^3$ - denoted by $\cal P$ for this introductory discussion - may be naturally viewed as a function space equipped with a family of weighted $L^p$-norm topologies (defined precisely below). We assume only that the topography of the planet, as well as the density function supported by the planet, are continuous. Our result can be summarized as

\begin{thmA} (cf: Thm.~1, Cor.~1) For all $\varepsilon > 0$ the set of planets in $\cal P$ which do not have the $\varepsilon$-descent property is dense in $\cal P$ in all  weighted $L^p$-topologies.
\end{thmA}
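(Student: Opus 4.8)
The plan is to exploit the linearity of the spherical harmonic expansion together with one elementary fact about point masses. Recall that for a planet of Brillouin radius $R$ the $\varepsilon$-descent property is precisely the statement that $SHE(V)$ converges on every sphere $|x|=r$ with $r\in(R-\varepsilon,R)$; its failure is the existence of a single radius $r_0$ in that shell on which the series diverges. Given an arbitrary planet $P$ with continuous nonnegative density $\rho$ supported on $\Omega$, Brillouin sphere $S$ of radius $R$, and given $\eta>0$, I would produce a planet $P'$ with $\|\rho'-\rho\|<\eta$ in the prescribed weighted $L^p$ norm for which $\varepsilon$-descent fails. If $P$ itself already fails $\varepsilon$-descent there is nothing to do, so I may assume $SHE(V)$ converges on every sphere of radius $r\in(R-\varepsilon,R)$.

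The construction is to insert a vanishingly small spherical ``nugget'' just beneath the Brillouin sphere. Since $\Omega$ is circumscribed by $S$, its closure meets $S$ at a contact point $p$ and $\mathrm{int}(\Omega)$ accumulates at $p$; hence there is an interior point $x^*\in\Omega$ with $R-\varepsilon<|x^*|<R$ and a radius $\delta>0$ so small that $B(x^*,\delta)\subset\mathrm{int}(\Omega)$ and $|x^*|+\delta<R$. Set $\rho'=\rho+a\,\psi\!\left(|x-x^*|/\delta\right)$ for a fixed nonnegative smooth radial bump $\psi$ and an amplitude $a>0$ to be chosen. Then $\rho'$ is continuous and nonnegative, is supported on the \emph{same} set $\Omega$ (so $P'$ is again a singleton planet), and $P'$ has the \emph{same} Brillouin sphere $S$ of radius $R$, since no matter was added outside $B(0,R)$ and the contact point $p$ is retained. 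The added mass is spherically symmetric about $x^*$, so by Newton's theorem its exterior potential equals that of a point mass $m>0$ at $x^*$; restricting to $S$ gives $V'|_S=V|_S+m/|x-x^*|$, whence by linearity $SHE(V')=SHE(V)+SHE\!\left(m/|x-x^*|\right)$ term by term. The second series is the Legendre generating series
\[
\frac{m}{r}\sum_{n\ge 0}\left(\frac{|x^*|}{r}\right)^{\!n}P_n(\cos\gamma),
\]
with $\gamma$ the angle between $x$ and $x^*$, which diverges at \emph{every} point with $r<|x^*|$ because the geometric factor $(|x^*|/r)^n$ dominates the bounded $P_n(\cos\gamma)$. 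Fixing any $r_0\in(R-\varepsilon,|x^*|)$, on the sphere $|x|=r_0$ the first series converges (by the standing assumption) while the second diverges at each point, so $SHE(V')$ diverges on that whole sphere; hence $P'$ lacks $\varepsilon$-descent. Crucially this holds for every $a>0$, i.e.\ independently of how small the perturbation is, since the location $|x^*|$ of the induced singularity does not depend on the mass.

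It remains to make the perturbation small and conclude. The difference $\rho'-\rho=a\,\psi(\cdot/\delta)$ is supported on $B(x^*,\delta)$, on which any admissible (locally bounded) weight is bounded by some constant, so its weighted $L^p$ norm is at most $C\,a\,\|\psi\|_{L^p}$ and its $L^\infty$ norm is $a\,\|\psi\|_\infty$; choosing $a$ small enough makes each $<\eta$, simultaneously in every weighted $L^p$ topology including $p=\infty$. Thus every planet lies within $\eta$ of a planet without the $\varepsilon$-descent property, which is the assertion. The step demanding the most care is the non-cancellation argument: one must know that the divergent Legendre series is added to a series that is convergent on \emph{all} of the shell $(R-\varepsilon,R)$, so that no conspiracy between background and nugget can restore convergence on the sphere $|x|=r_0$. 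This is exactly what the initial reduction to the case of an $\varepsilon$-descending background $V$ secures, and it is the only genuine obstacle; the geometric existence of the interior point $x^*$ arbitrarily close to $S$, and the preservation of continuity, positivity, support and Brillouin radius under the perturbation, are routine once the contact point $p$ is used as the anchor.
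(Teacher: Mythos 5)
Your core mechanism is the same as the paper's --- Newton's theorem to replace a radially symmetric blob by a point mass, linearity/superposition to transfer divergence, and the classical fact that the Legendre series of a single point mass diverges at every point strictly inside its own sphere --- but your construction differs in a way that opens a genuine gap. The paper's definition of $\varepsilon$-descent only requires uniform convergence of $SHE(V_f)$ \emph{in the free space bounded by the Brillouin sphere $S^2_{R(f)}$ and the topography $T(K_f)$}; divergence at points buried inside the matter does not negate it. Your perturbation deliberately keeps the support equal to the original $\Omega$ (you present this as a feature), and your nugget sits \emph{inside} the matter, so the divergence you produce is on spheres $|x|=r_0<|x^*|$ whose relevant points may lie entirely within $\Omega$. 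Concretely, take $P$ to be a homogeneous ball $K_f=\ov{B}_R({\bf 0})$: then $T(K_f)=S^2_{R}$, the free space between topography and Brillouin sphere is empty, and the $\varepsilon$-descent property holds vacuously --- for $P$ and for \emph{every} planet with the same support, including your $P'$. So no support-preserving perturbation can ever destroy $\varepsilon$-descent for such a planet, and your argument (including the WLOG reduction, which merely confirms that the ball descends) cannot establish density near it. Your reading of $\varepsilon$-descent as ``convergence on every sphere of radius $r\in(R-\varepsilon,R)$'' is where this slips in: that is a strictly stronger property than the paper's, so its failure is strictly weaker than what Theorem A requires.

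This is exactly why the paper does \emph{not} fix the support: it replaces $f$ by a smoothed point-mass array whose support is a finite union of small balls, Hausdorff-$\varepsilon$-close to $K_f$ but with a different topography, thereby creating free space immediately below the new Brillouin sphere; the divergence (forced by the unique extremal point mass, via the same superposition argument you use) is then exhibited at free-space points in that shell. Your single-nugget idea does work, with small repairs, for any planet whose free space already meets the shell $\{R-\varepsilon<|x|<R\}$ (choose a free-space point $y_0$ there, then an interior $x^*$ with $|y_0|<|x^*|<R$), and in that regime it is more economical than the paper's full spherical-filling construction; but to cover all of $C^{0,+}_{cc}(\mathbb R^3)$ you must allow the topography to move. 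A secondary, minor point: your justification that the point-mass series diverges (``the geometric factor dominates the bounded $P_n$'') is not a proof --- boundedness of $P_n(\cos\gamma)$ alone is consistent with convergence; one needs that the terms $(|x^*|/r)^n P_n(\cos\gamma)$ do not tend to $0$, which follows from the Legendre asymptotics $P_n(\cos\gamma)\sim\sqrt{2/(\pi n\sin\gamma)}\cos((n+\tfrac12)\gamma-\tfrac\pi4)$ (and $P_n(\pm1)=(\pm1)^n$). The paper invokes this as a known fact, so this is a presentational rather than substantive defect.
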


Interestingly, the methods used to prove this theorem also can be used to construct elementary examples of planets with non-radially-symmetric topographies for which $SHE_V$ can be extended all the way down to the topography (Theorem 2; more involved examples have previously appeared in \cite{hm}).
\vskip.2in

In the presence of a slightly stronger condition beyond simple continuity, much more definitive results may be obtained, as we show in \cite{ccob}.
\vskip.5in


\section*{Terminology and notation} We recall that $C^0_c(\mathbb R^3)$ denotes the (topological) vector space of continuous real-valued functions on $\mathbb R^3$ with compact support. An element of $C^0_c(\mathbb R^3)$ is conventionally referred to as a {\it bump function} on $\mathbb R^3$. We write $C^{0,+}_c(\mathbb R^3)\subset C^0_c(\mathbb R^3)$ for the subspace consisting of  those $f:\mathbb R^3\to\mathbb R$ with $Range(f)\subset\mathbb R_+ = \{x\in\mathbb R\ |\ x\ge 0\}$, and $C^{0,+}_{cc}(\mathbb R^3)$ for the subspace of $C^{0,+}_c(\mathbb R^3)$ consisting of those functions with 
\begin{itemize}
\item compact, connected support for which 
\item the boundary of the support consists of a finite disjoint union of closed, compact, connected 2-dimensional topological submanifolds of $\mathbb R^3$, and for which
\item the interior of the support is non-empty and contains the origin.
\end{itemize}
\vskip.2in

For the purposes of modelling problems involving gravity for a singleton planet, the space of functions $C^{0,+}_{cc}(\mathbb R^3)$ provides a suitably general representation in which the planet is represented by a continuous mass-density function on $\mathbb R^3$ with compact, connected support corresponding to the physical mass of the planet. In particular, we do not assume any degree of differentiability, either for the density function or for the boundary of its support.
\vskip.2in
Following convention, we write $B_r({\bf x})$ for the open ball of radius $r$ centered at ${\bf x}\in\mathbb R^3$, $\ov{B}_r({\bf x})$ for its closure, with $S_r({\bf x}) = \partial\ov{B}_r({\bf x})$ its spherical boundary. If $f\in C^{0,+}_{cc}(\mathbb R^3)$, we set $K_f := supp(f)$. By assumption, the boundary of $\partial(K_f) = K_f\backslash int(K_f)$ can be expressed as a disjoint union of closed, compact surfaces
\begin{equation}\label{eqn:boundary}
\partial(K_f) = S_1\sqcup S_2\sqcup\dots\sqcup S_m
\end{equation}
By the generalized Jordan-Brouwer Separation Theorem, each closed 2-dim.~submanifold $S_i$ separates $\mathbb R^3$ into two path-connected components The {\it topography} $T(K_f)$ of $K_f$ is then defined to be the component $S_t$ such that in the complement $\mathbb R^3\backslash S_t$, one component contains the point at $\infty$, while the other component contains $\partial(K_f)\backslash S_t$. In other words, $S_t$ is the extremal path component of $\partial(K_f)$. For most actual planets, $\partial(K_f) = S_1\cong S^2$.
\vskip.2in
Now define
\[
R(f) = \inf\{m\ |\ K_f\subset B_m({\bf 0})\}
\]
As $K_f$ is compact, one has
\[
\inf\{m\ |\ K_f\subset B_m({\bf 0})\} = \min\{m\ |\ K_f\subset \ov{B}_m({\bf 0})\} = \min\{m\ |\ T(K_f)\subset \ov{B}_m({\bf 0})\}
\]
The {\it Brillouin sphere} associated with $f$ is then defined to be $S^2_{R(f)} := S_{R(f)}({\bf 0})$; it is the smallest sphere centered at $\bf 0$ containing $K_f$ (or, equivalently, $T(K_f)$). We occasionally abbreviate this as $S^2_f$ when there is no confusion.
\vskip.2in

We also need to say something about norm topologies on the function space $C^0_c(\mathbb R^3)$. The most basic is the unweighted $L^1$-norm
\[
\|f\|_1 := \int_{\mathbb R^3} |f({\bf x})|d{\bf x} = \int_{K_f} |f({\bf x})|d{\bf x}
\]
and the corresponding $L^p$-norms ($p > 1$)
\[
\|f\|_p := \left(\int_{\mathbb R^3} |f({\bf x})|^pd{\bf x}\right)^{1/p} = \left(\int_{K_f} |f({\bf x})|^pd{\bf x}\right)^{1/p}
\]
which for $p=\infty$ should be interpreted as the sup norm
\[
\|f\|_\infty = \sup_{{\bf x}\in K_f}|f({\bf x})|
\]
More generally, given a continuous function $w:\mathbb R^3\to \mathbb R^+$, we define the $w$-weighted $L^p$-norm ($1\le p\le \infty$) by
\[
\|f\|_{p,w} := \left(\int_{\mathbb R^3} |f({\bf x})|^pw({\bf x})d{\bf x}\right)^{1/p} = \left(\int_{K_f} |f({\bf x})|^pw({x})d{\bf x}\right)^{1/p}
\]
We write $\mu_{p,w}(_-,_-)$ for the corresponding metric: $\mu_{p,w}(f_1,f_2) := \|f_1 - f_2\|_{p,w}$. The {\it $L^{p,w}$-topology} on either $C^0_c(\mathbb R^3)$ or the subspace $C^{0,+}_{cc}(\mathbb R^3)$ will then refer to the metric topology induced by $\mu_{p,w}$. When $w = 1$ is the constant function $1$, we refer to the $L^{p,w}$-topology simply as the {\it $L^p$-topology}.
\vskip.2in
If $f\in C^0(\mathbb R^3)$ is a continuous function on $\mathbb R^3$ with not necessarily compact support, then $\|f\|_{p,w}$ need not be finite. This issue is typically rectified by computing the $\|_-\|_{p,w}$-norm on the restriction of the function to some compact set. For our purposes it will suffice to restrict to closed balls centered at the origin. Thus, for a given real number $N > 0$, let $\chi_N$ denote the characteristic function of $\ov{B}_N({\bf 0})$; this defines a semi-norm $\|_-\|_{N,p,w}$ and corresponding pseudo-metric $\mu_{N,p,w}(_-,_-)$ by
\[
\|f\|_{N,p,w} := \|\chi_N\cdot f\|_{p,w},\quad \mu_{N,p,w}(f,f') := \|f - f'\|_{N,p,w}
\]
Also, for any compact $K\subset\mathbb R^3$ we set
\begin{gather*}
var(f,K) := inf\{(b-a)\ |\ f(K)\subset [a,b]\subset\mathbb R\}\\
mean(f,K) = \frac{\int_K f({\bf x})\ d{\bf x}}{|K|}
\end{gather*}
where $|K| = \int_K 1\;d{\bf x} < \infty$ denotes the volume of $K$.
\vskip.2in

Recall that if $(X,\mu)$ is a metric space and $K\subset X$ a non-empty subspace, then an $\varepsilon$-neighborhood of $K$ is given as
\[
N_{\varepsilon}(K) := \{ x\in X\ |\ \exists y\in K\,\text{with }\, \mu(x,y) < \varepsilon\}
\]
\begin{definition} If $(X,\mu)$ is a metric space, $K,L\subset X$ are two non-empty subspaces of $X$, and $\varepsilon > 0$, we say $K$ and $L$ are $\varepsilon$-close iff $K\subset N_{\varepsilon}(L)$ and $L\subset N_{\varepsilon}(K)$. The distance between $K$ and $L$ is then given as
\[
d(K,L) := \inf\{\varepsilon\ |\ K,L\,\text{are }\varepsilon\text{-close}\}
\]
\end{definition}

An easy argument in point-set topology shows that

\begin{proposition} If $K,L$ and $X$ are as in the above definition with $K,L$ compact, then $d(K,L) = 0$ iff $K=L$.
\end{proposition}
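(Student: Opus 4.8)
The plan is to prove the two implications separately, the forward direction ($K = L \Rightarrow d(K,L) = 0$) being immediate and the reverse direction ($d(K,L) = 0 \Rightarrow K = L$) being where the compactness hypothesis actually does work. For the forward direction, I would simply observe that if $K = L$ then for every $\varepsilon > 0$ we trivially have $K \subset N_\varepsilon(K) = N_\varepsilon(L)$ and $L \subset N_\varepsilon(L) = N_\varepsilon(K)$, so $K$ and $L$ are $\varepsilon$-close for \emph{every} $\varepsilon > 0$; hence the defining infimum is $0$.

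For the reverse direction, the first step is a monotonicity observation: if $K$ and $L$ are $\varepsilon$-close and $\varepsilon' > \varepsilon$, then since $N_\varepsilon(L) \subset N_{\varepsilon'}(L)$ and $N_\varepsilon(K) \subset N_{\varepsilon'}(K)$ they are also $\varepsilon'$-close, so the set $A := \{\varepsilon > 0 \mid K, L \text{ are } \varepsilon\text{-close}\}$ is upward closed in $(0,\infty)$. Combined with $\inf A = d(K,L) = 0$, this forces $A = (0,\infty)$, i.e. $K$ and $L$ are $\varepsilon$-close for every $\varepsilon > 0$.

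Next I would fix an arbitrary $x \in K$. For each positive integer $n$, the inclusion $K \subset N_{1/n}(L)$ produces a point $y_n \in L$ with $\mu(x, y_n) < 1/n$, so that $y_n \to x$. This is exactly where compactness is used: since $L$ is compact it is closed in $X$, so the limit $x$ of the sequence $(y_n) \subset L$ must itself lie in $L$. Thus $K \subset L$, and interchanging the roles of $K$ and $L$ gives $L \subset K$, whence $K = L$.

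The argument is essentially routine point-set topology, so there is no serious obstacle; the one genuinely load-bearing ingredient is the closedness of $K$ and $L$, which is precisely what compactness supplies. I expect this to be the only place where the hypothesis cannot be dropped, as witnessed by an open ball and its closure in $\mathbb R^3$, which have distance $0$ without being equal.
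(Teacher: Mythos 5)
Your proof is correct: the paper itself gives no argument for this proposition (it merely asserts it follows from ``an easy argument in point-set topology''), and your argument --- reducing to $\varepsilon$-closeness for all $\varepsilon>0$, then using that compact subsets of a metric space are closed to conclude $K\subset L$ and $L\subset K$ --- is exactly the standard argument the authors have in mind. Your observation that closedness is the only property of compactness actually used, together with the open-ball/closed-ball counterexample, is a nice sharpening but does not change the substance.
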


This proposition can be extended to show that this distance function yields a well-defined metric on the space of compact subspaces of $X$. 
\vskip.2in

For a given mass-density function $f\in C^0_{cc}(\mathbb R^3)$, $V_f$ will denote its gravitational potential (again viewed as a function on all of $\mathbb R^3$), and $SHE_N(V_f)$ the truncation at degree $(-N-1)$ in the radial coordinate of the spherical harmonic expansion of $V_f$ centered at $\infty$:
\[
SHE_N(V_f) = \frac{GM}{R}\sum_{n=0}^N\sum_{m= -n}^n \left(\frac{R}{r}\right)^{n+1} C_{n,m}\ov{Y}_{n,m}(\theta,\phi)
\]
where the coefficients $C_{n,m}$ are computed in terms of the mass-density function $f$ \cite{hm}. 
The corresponding series occurs as the limit
\[
SHE(V_f) = \underset{n}{\varinjlim}\, SHE_n(V_f) = \frac{GM}{R}\sum_{n=0}^\infty\sum_{m= -n}^n \left(\frac{R}{r}\right)^{n+1} C_{n,m}\ov{Y}_{n,m}(\theta,\phi)
\]
By \cite{ccob}, $SHE(V_f) = SHE(V_f)(r,\theta,\phi)$ converges uniformly for all $r> R(f)$. For $\varepsilon > 0$ we will say that the series {\it $\varepsilon$-descends} if $SHE(V_f)(r,\theta,\phi)$ converges uniformly for all $r > R(f)-\varepsilon$ in the free space bounded by $S^2_{R(f)}$ and $T(K_f)$. This is equivalent to saying that $SHE_{V_f}$ converges uniformly in the free space of $K_f$ contained in an open $\varepsilon$-neighborhood of $S_{R(f)}$. The subspace of $C_{cc}^{0,+}(\mathbb R^3)$ consisting of those functions for which $SHE(V_f)$ $\varepsilon$-descends will be denoted by $C_{cc}^{0,+}(\mathbb R^3,\varepsilon)$.
\vskip.2in
It will be useful to describe this property in functional terms. For $f\in C_{cc}^{0,+}(\mathbb R^3)$, we set
\[
R_c(f) = \inf\{r > 0\ |\ SHE(V_f)\text{ converges uniformely on }\mathbb R^3\backslash\ov{B}_r({\bf 0})\}
\]
Then $R_c(f)\le R(f)$. Moreover, $f\in C_{cc}^{0,+}(\mathbb R^3,\varepsilon)$ iff $R_c(f)\le R(f) - \varepsilon$.
\vskip.5in


\section*{Statement of the main results} 

\begin{theorem} For any $f\in C^{0,+}_{cc}(\mathbb R^3)$, $1\le p \le \infty$, smooth function $w:\mathbb R^3\to \mathbb R^+$, $N\in\mathbb R^+$, and $\varepsilon > 0$, there is an $f_\varepsilon\in C^{0,+}_{cc}(\mathbb R^3)$ satisfying
\begin{itemize}
\item[(a)] $\mu_{p,w}(f,f_\varepsilon) < \varepsilon$;
\item[(b)] $d(K_{f_\varepsilon},K_f) <\varepsilon$ and $d(\partial K_{f_\varepsilon},\partial K_f) <\varepsilon$;
\item[(c)] $\mu_{N,p,w}(V_{f}(_-), V_{f_\varepsilon}(_-)) < \varepsilon$;
\item[(d)] $d(S^2_f, S^2_{f_\varepsilon}) < \varepsilon$;
\item[(e)] $SHE(V_{f_\varepsilon})$ does not $\varepsilon$-descend.
\end{itemize}
\end{theorem}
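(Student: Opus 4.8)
The plan is to perturb $f$ by grafting onto it a tiny, radially symmetric mass concentration placed just \emph{outside} the Brillouin sphere, and then to exploit the fact that such a concentration forces the spherical harmonic series to diverge in a thin shell straddling $S^2_f$ while costing almost nothing in any of the norms of (a)--(d). The whole argument rests on three elementary inputs: the linearity of the coefficients $C_{n,m}$ in the density, the expansion $\tfrac1{|{\bf x}-{\bf y}|}=\sum_n \tfrac{|{\bf y}|^n}{|{\bf x}|^{n+1}}P_n(\hat{\bf x}\cdot\hat{\bf y})$ (valid precisely for $|{\bf x}|>|{\bf y}|$), and Newton's shell theorem.

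First we build the gadget. Fix a point where the topography meets the sphere, $p\in T(K_f)\cap S^2_{R(f)}$, and after a rotation take $p=(0,0,R)$ with $R=R(f)$. For a small $\rho>0$ set $q=(0,0,R+2\rho)$ and let $b$ be a smooth, nonnegative, radially symmetric density supported in $\ov B_\rho(q)$ with total mass $m>0$; since $\ov B_\rho(q)\subset\{R+\rho\le|{\bf x}|\le R+3\rho\}$ it is disjoint from $\ov B_R({\bf 0})\supset K_f$. We then join $\ov B_\rho(q)$ to $K_f$ by a thin neck, a nonnegative density $c$ supported in a slender tube from $p$ to $\partial B_\rho(q)$ inside the shell $\{R\le|{\bf x}|\le R+\rho\}$, and put $f_\varepsilon:=f+b+c$, rounding the junctions so that $f_\varepsilon\in C^{0,+}_{cc}(\mathbb R^3)$. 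By construction $R(f_\varepsilon)=R+3\rho$.

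Next comes the divergence mechanism. Because each $C_{n,m}$, hence each truncation, is linear in the density, $SHE(V_{f_\varepsilon})=SHE(V_f)+SHE(V_b)+SHE(V_c)$ termwise. The generating-function identity shows that for any compactly supported density $g$ the series $SHE(V_g)$ converges for $|{\bf x}|>R(g)$, so $SHE(V_f)$ converges for $r>R$ and $SHE(V_c)$ converges for $r>R+\rho$. For the radial lump $b$, Newton's theorem makes $V_b$ equal to the potential of a point mass $m$ at $q$ outside $\ov B_\rho(q)$, so $SHE(V_b)$ coincides with that of the point mass; evaluated along the positive axis this is $\tfrac{Gm}{r}\sum_n(|q|/r)^n$, which \emph{diverges} for every $r<|q|=R+2\rho$. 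Thus at points $(0,0,r)$ with $R+\rho<r<R+2\rho$ the first two series converge while the third diverges, so by linearity $SHE(V_{f_\varepsilon})$ diverges there; since such points lie in every exterior region $\mathbb R^3\setminus\ov B_{r_0}$ with $r_0<R+2\rho$, uniform convergence fails and $R_c(f_\varepsilon)\ge R+2\rho$. As $R(f_\varepsilon)=R+3\rho$, choosing $\rho<\varepsilon/3$ gives $R_c(f_\varepsilon)\ge R+2\rho>R>R(f_\varepsilon)-\varepsilon$, so $f_\varepsilon$ does not $\varepsilon$-descend, which is (e); the same choice yields $d(K_{f_\varepsilon},K_f),\,d(\partial K_{f_\varepsilon},\partial K_f)\le 3\rho<\varepsilon$ (since $K_f\subset K_{f_\varepsilon}$ and every added point lies within $3\rho$ of $K_f$) for (b), and $S^2_f,S^2_{f_\varepsilon}$ are concentric of radii $R$ and $R+3\rho$, so $d(S^2_f,S^2_{f_\varepsilon})=3\rho<\varepsilon$ for (d).

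The decisive point for the remaining estimates is that the divergence above holds for \emph{every} $m>0$, however small; so having fixed $\rho$, we are free to drive the mass, equivalently $\|b+c\|_\infty$, to $0$. This makes $\|f-f_\varepsilon\|_{p,w}=\|b+c\|_{p,w}\le\|b+c\|_\infty(\mathrm{vol\ supp})^{1/p}(\sup w)^{1/p}$ arbitrarily small for (a), and, using $\|V_{b+c}\|_\infty\le C\,\|b+c\|_\infty\,\rho^2$ for a Newtonian potential of a density confined to a ball of radius $O(\rho)$, makes $\mu_{N,p,w}(V_f,V_{f_\varepsilon})=\|\chi_N V_{b+c}\|_{p,w}\le\|V_{b+c}\|_\infty(\sup_{\ov B_N}w)^{1/p}|\ov B_N|^{1/p}$ arbitrarily small uniformly in $p$, including $p=\infty$, for (c). I expect the genuine friction here to be bookkeeping rather than mathematics: one must order the choices correctly (first $\rho$ to fix the geometry and the location of the divergence shell, only then the mass to secure the norm bounds), and one must perform the neck attachment so that $f_\varepsilon$ truly lands in $C^{0,+}_{cc}(\mathbb R^3)$---in particular so that $\partial K_{f_\varepsilon}$ remains a finite disjoint union of closed topological surfaces after the lump and stalk are glued on and the corners are rounded. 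The manifold-boundary condition is the one spot where the otherwise routine construction demands real care.
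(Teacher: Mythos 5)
Your divergence engine is correct, and it is in fact the same engine the paper runs: Newton's theorem converts a radially symmetric lump into a point mass, the point-mass Legendre series diverges everywhere on and below its Brillouin sphere, and superposition transfers that divergence to the full series. The difference is in deployment: you graft a tiny external lump (plus neck) onto $f$, whereas the paper discards $f$ entirely and replaces it by a smoothed point-mass array supported on a finite union of balls with a unique extremal ball. Your route makes (a)--(d) nearly trivial (fix the geometry with $\rho$, then shrink the mass), which is slicker than the paper's filling construction with its variance and interstice bookkeeping. One small repair: your divergence witnesses $(0,0,r)$, $R+\rho<r<R+2\rho$, lie inside $supp(b)$, not in free space, and $\varepsilon$-descent is defined by convergence in free space; this is harmless because the point-mass series $\sum_n (|q|/r)^n P_n(\cos\gamma)$ diverges at \emph{every} angle $\gamma$ when $r<|q|$ (its terms do not tend to zero, since $|P_n(\cos\gamma)|\ge c\,n^{-1/2}$ along a subsequence), so free-space points of the shell, e.g.\ antipodal to the lump, witness divergence just as well.

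The genuine gap is the step you flag as needing ``real care'': getting $f_\varepsilon\in C^{0,+}_{cc}(\mathbb R^3)$ after the gluing. This is not bookkeeping, and it is exactly the difficulty the paper's construction is engineered to avoid. Membership in $C^{0,+}_{cc}(\mathbb R^3)$ requires $\partial K_{f_\varepsilon}$ to be a finite disjoint union of closed topological $2$-manifolds, and the paper assumes nothing about $\partial K_f$ beyond this: $T(K_f)$ may be a wildly embedded topological surface (there exist $2$-spheres in $\mathbb R^3$ wild at every point). Your neck must cross $T(K_f)$, and near the seam the new boundary is $\bigl(\partial K_f\setminus \mathrm{tube}\bigr)\cup\bigl(\partial(\mathrm{tube})\setminus int(K_f)\bigr)$, whose problematic locus is the intersection of the tame tube boundary with the possibly wild surface $T(K_f)$. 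There is no transversality or general-position theory for merely topological embeddings: that intersection need not be a $1$-manifold (it can be a Cantor-like set), and at such seam points the union's boundary need not be locally Euclidean. ``Rounding the junction'' has no meaning when the surface you are attaching to is wild in every neighborhood of the attachment point; the same obstruction defeats the obvious variants (attaching a ball instead of a tube, or cutting a window in $\partial K_f$), since any local modification of $supp(f)$ creates a seam with the old boundary.

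This is precisely why the paper never glues anything onto $K_f$: it replaces $f$ wholesale by an SPMA whose support is a finite union of round balls, where the manifold-boundary condition can be arranged by honest general position of spheres. To close your gap you would need a preliminary taming step --- approximate $f$ in $\mu_{p,w}$, with supports and boundaries Hausdorff-close, by a density whose support has tame boundary --- and that step \emph{is} the substance of the paper's Claim 2 (its conditions a1)--a5)). Once such a tamed approximant is in hand, your lump-and-neck perturbation (or, for that matter, the paper's extremal-ball argument) finishes the proof; but as written, your construction is only valid when $\partial K_f$ is tame, which is strictly narrower than the theorem's hypotheses.
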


This theorem admits the following corollaries.

\begin{corollary} For all $1\le p \le \infty$, weight functions $w$ and $\varepsilon > 0$
\[
\emptyset = int(C^{0,+}_{cc}(\mathbb R^3,\varepsilon))\subset C^{0,+}_{cc}(\mathbb R^3)
\]
 in the $L^{p,w}$-topology. Equivalently, its complement $C^{0,+}_{cc}(\mathbb R^3)\backslash C^{\infty,+}_{cc}(\mathbb R^3,\varepsilon)$ is dense in $C^{\infty,+}_{cc}(\mathbb R^3)$ in the $L^{p,w}$-topology.
\end{corollary}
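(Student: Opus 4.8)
The plan is to derive the corollary formally from Theorem~1, since all the substantive construction lives in the theorem; what remains is a short point-set reduction together with a monotonicity observation about the $\varepsilon$-descent property. First I would record the standard topological equivalence of the two displayed formulations: in any topological space $X$ with a subset $A$ one has $int(A) = \emptyset$ if and only if $X\setminus A$ is dense, because $int(A) = X\setminus\overline{X\setminus A}$. Taking $X = C^{0,+}_{cc}(\mathbb R^3)$ equipped with the $L^{p,w}$-topology and $A = C^{0,+}_{cc}(\mathbb R^3,\varepsilon)$, it therefore suffices to prove that the complement, namely the set of density functions whose $SHE$ fails to $\varepsilon$-descend, is dense.

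Next I would invoke the functional reformulation stated earlier, $f\in C^{0,+}_{cc}(\mathbb R^3,\varepsilon)\iff R_c(f)\le R(f)-\varepsilon$, to extract the needed monotonicity: if $\varepsilon'\le\varepsilon$ then failure of $\varepsilon'$-descent implies failure of $\varepsilon$-descent, since $R_c(g) > R(g)-\varepsilon'$ together with $\varepsilon'\le\varepsilon$ forces $R_c(g) > R(g)-\varepsilon$. The density argument then runs as follows. Fix $f\in C^{0,+}_{cc}(\mathbb R^3)$ and a target radius $\delta > 0$; I must produce $g$ in the complement with $\mu_{p,w}(f,g) < \delta$. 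Set $\varepsilon' = \min(\delta,\varepsilon)$ and apply Theorem~1 to $f$ with perturbation parameter $\varepsilon'$ (and the given $p$, $w$, and any $N$), obtaining $f_{\varepsilon'}\in C^{0,+}_{cc}(\mathbb R^3)$. Condition~(a) gives $\mu_{p,w}(f,f_{\varepsilon'}) < \varepsilon'\le\delta$, while condition~(e) says $SHE(V_{f_{\varepsilon'}})$ fails to $\varepsilon'$-descend; by the monotonicity above (since $\varepsilon'\le\varepsilon$) it also fails to $\varepsilon$-descend, so $f_{\varepsilon'}\notin C^{0,+}_{cc}(\mathbb R^3,\varepsilon)$. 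Thus $g = f_{\varepsilon'}$ witnesses density, and the corollary follows.

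I expect essentially no obstacle beyond Theorem~1 itself, which carries the entire analytic weight. The one point that genuinely requires care is the \emph{direction} of the monotonicity: one must shrink the perturbation parameter to $\min(\delta,\varepsilon)$ so that the stronger conclusion (failure of $\varepsilon'$-descent for small $\varepsilon'$) propagates to failure of $\varepsilon$-descent at the fixed $\varepsilon$, rather than the reverse implication, which would fail. I would also note in passing that the ``Equivalently'' clause as printed carries a typographical $\infty$ where it should read $0$; the intended statement is density of $C^{0,+}_{cc}(\mathbb R^3)\setminus C^{0,+}_{cc}(\mathbb R^3,\varepsilon)$ in $C^{0,+}_{cc}(\mathbb R^3)$, which is exactly what the argument above establishes.
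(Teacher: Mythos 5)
Your proposal is correct and takes exactly the route the paper intends: the paper states this corollary as an immediate consequence of Theorem~1 without writing out the argument, and your reduction (empty interior iff dense complement, then applying Theorem~1 with the shrunk parameter $\min(\delta,\varepsilon)$ and using the monotonicity $R_c(g) > R(g)-\varepsilon' \ge R(g)-\varepsilon$) supplies precisely the details left implicit, including the one point that genuinely needs care, namely the direction in which the descent property propagates. You are also right that the ``Equivalently'' clause carries typographical superscripts $\infty$ that should read $0$.
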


The association $f\mapsto R_c(f)$ defines a map $R_c: C_{cc}^{0,+}(\mathbb R^3)\to\mathbb R_+$. Let $R_{c,\varepsilon}$ denote the restriction of $R_c$ to $C^{0,+}_{cc}(\mathbb R^3,\varepsilon)$, equipped with the induced topology.

\begin{corollary} For all $\varepsilon > 0$, $1\le p < \infty$, and weight functions $w$, $R_{c,\varepsilon}$ is everywhere discontinuous on its domain in the $L^{p,w}$-topology.
\end{corollary}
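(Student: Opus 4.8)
The plan is to prove discontinuity pointwise. Fix an arbitrary $f$ in the domain $C^{0,+}_{cc}(\mathbb R^3,\varepsilon)$, write $R=R(f)$ and $r_0=R_c(f)$, and recall that membership in the domain means $r_0\le R-\varepsilon<R$. I will exhibit a sequence $(f_n)$ lying \emph{inside} the domain with $\mu_{p,w}(f,f_n)\to 0$ but $R_c(f_n)\equiv R$, so that $R_{c,\varepsilon}(f_n)=R$ stays a fixed distance $\ge\varepsilon$ from $R_{c,\varepsilon}(f)=r_0$; this is exactly the failure of continuity at $f$. The mechanism I exploit is that the location of the outermost singularity of the downward continuation of $V_f$, and hence the value of $R_c$, is insensitive to the \emph{mass} of a perturbation but extremely sensitive to its \emph{position}: an arbitrarily light bump seated against the Brillouin sphere pins $R_c$ at $R$.

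Concretely, first I would choose a contact point $c\in K_f\cap S_R(\mathbf 0)$, which is non-empty because $R$ is the minimal radius with $K_f\subset\ov B_R(\mathbf 0)$. I then set $f_n=f+b_n$, where $b_n(\mathbf x)=d_n\,\psi(|\mathbf x-c|/a)$ is radially symmetric about $c$, with $a=2\varepsilon$, a fixed continuous profile $\psi\ge 0$ supported in $[0,1]$ and positive on $[0,1)$, and amplitudes $d_n\to 0^+$. By Newton's shell theorem the exterior potential of $b_n$ equals $GM_n/|\mathbf x-c|$ with $M_n=\int b_n$, whose origin-centered exterior spherical-harmonic expansion is the classical Laplace multipole series about $c$; this series converges uniformly for $|\mathbf x|>|c|=R$ and diverges for $|\mathbf x|<R$, for \emph{every} $M_n>0$. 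Since the potential depends linearly on the source, $V_{f_n}=V_f+V_{b_n}$, so the exterior expansion of $V_{f_n}$ is the sum of those of $V_f$ and $V_{b_n}$.

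It then remains to verify three points. (i) Closeness: $\|b_n\|_{p,w}^p=d_n^p\int_{\ov B_a(c)}\psi(|\mathbf x-c|/a)^p\,w\,d\mathbf x\to 0$ as $d_n\to 0$, using $1\le p<\infty$ and continuity of $w$ on the compact ball; hence $\mu_{p,w}(f,f_n)\to 0$. (ii) Domain membership: the support $K_{f_n}=K_f\cup\ov B_a(c)$ is connected, since $c\in K_f$, and bulges out to radius $R+2\varepsilon$, so $R(f_n)=R+2\varepsilon$; on the other hand $V_{f_n}$'s expansion converges uniformly for $|\mathbf x|>R$, as both summands do, while for each $r<R$ one picks an off-axis point of radius $\rho\in(\max(r,r_0),R)\subset(r_0,R)$, where the convergent series for $V_f$ is added to the divergent Laplace series for $V_{b_n}$, forcing divergence on $\mathbb R^3\backslash\ov B_r(\mathbf 0)$; thus $R_c(f_n)=R\le R+\varepsilon=R(f_n)-\varepsilon$ and $f_n\in C^{0,+}_{cc}(\mathbb R^3,\varepsilon)$. (iii) The jump: $R_c(f_n)-R_c(f)=R-r_0\ge\varepsilon$ for all $n$, so $R_{c,\varepsilon}(f_n)\not\to R_{c,\varepsilon}(f)$, proving discontinuity at $f$. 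As $f$ was arbitrary, $R_{c,\varepsilon}$ is nowhere continuous.

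The main obstacle I expect is the precise determination $R_c(f_n)=R$: simultaneously the lower bound $R_c(f_n)\ge R$, where "convergent plus divergent is divergent" must be realized in the uniform-on-exteriors sense of the definition, at a point genuinely outside $\ov B_r(\mathbf 0)$ and off the ray through $c$ where the Legendre terms do not decay, and the upper bound $R_c(f_n)\le R$, which says the bump introduces no singularity farther out than $c$. A secondary, routine technical point is checking that attaching the round bump $\ov B_a(c)$ keeps $\partial K_{f_n}$ a finite disjoint union of closed topological surfaces; this holds for generic $c$ and $a$ by transversality, and the margin $a=2\varepsilon$ leaves room to perturb $a$ slightly without leaving the domain.
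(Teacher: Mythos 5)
Your overall strategy is not only sound but is actually the route this statement \emph{needs}, and it differs from the paper's implicit derivation. The paper offers no written proof of this corollary; it is presented as a consequence of Theorem~1. But the perturbations $f_\varepsilon$ produced by Theorem~1 fail to $\varepsilon$-descend, hence lie \emph{outside} the domain $C^{0,+}_{cc}(\mathbb R^3,\varepsilon)$; they can witness discontinuity of the global map $R_c$ at points of the domain, but not discontinuity of the restriction $R_{c,\varepsilon}$ in the induced topology, which requires approximating sequences \emph{inside} the domain. Your construction supplies exactly such sequences, and its analytic core is correct: an arbitrarily light bump of fixed radius $2\varepsilon$, radially symmetric about a contact point $c\in K_f\cap S_{R}(\mathbf 0)$, has (by Newton's theorem and the mean-value property of solid harmonics) the same origin-centered expansion as a point mass at $c$; that series diverges at every point with $|\mathbf x|<R$ and converges uniformly on exteriors of radius $>R$, so by superposition $R_c(f_n)=R$ exactly, while $R(f_n)=R+2\varepsilon$ keeps $f_n$ in the domain and forces the jump $R_c(f_n)-R_c(f)\ge\varepsilon$. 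This is the paper's own Claim~2 toolkit (point-mass divergence plus superposition) inverted: the paper uses a barely protruding outermost ball to destroy descent, you use a far-protruding, arbitrarily light ball to preserve descent while relocating $R_c$.

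The genuine gap is domain membership through the boundary condition in the definition of $C^{0,+}_{cc}(\mathbb R^3)$: you must show that $\partial K_{f_n}=\partial\bigl(K_f\cup\overline{B}_{2\varepsilon}(c)\bigr)$ is a finite disjoint union of closed, compact, connected topological $2$-submanifolds. Your appeal to ``transversality for generic $c$ and $a$'' is not available in this category: $\partial K_f$ is only a $C^0$ submanifold, for which there is no transversality theorem, and topological surfaces in $\mathbb R^3$ can be wild (they can even have positive Lebesgue measure), so Sard-type or genericity-in-$a$ arguments do not apply; the intersection $S_{2\varepsilon}(c)\cap\partial K_f$ can in principle be pathological for every admissible radius, leaving non-locally-Euclidean points on the union's boundary. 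The repair is to stop carrying $f$ itself as a summand: first replace $f$ by the paper's Claim-2 SPMA $\lambda_n$ with $\mu_{p,w}(f,\lambda_n)<1/n$, whose support is a finite union of closed balls in general position (so the boundary condition holds), then adjoin one additional smoothed point mass of fixed radius $2\varepsilon$ and mass tending to $0$, centered at $c_n$ on the ray through $c$ with $|c_n|=R+1/n$, so that $c_n$ is the unique farthest center. The paper's point-mass argument then pins $R_c$ of this configuration at $|c_n|\to R$ while its Brillouin radius is $|c_n|+2\varepsilon$, so these witnesses lie in $C^{0,+}_{cc}(\mathbb R^3,\varepsilon)$, converge to $f$ in $\mu_{p,w}$, and keep $R_c$ at distance at least $\varepsilon-o(1)$ from $R_c(f)$. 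With that substitution your proof is complete.
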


These corollaries imply, among other things, that the property of being $\varepsilon$-extendable is completely unstable in any $L^{p,w}$-topology; an arbitrarily small deformation of an $\varepsilon$-extendable $f$ (measured in the metric $\mu_{p,w}$) can produce an $f'$ for which the property fails.
\vskip.2in

On the other hand, the constructions used in proving the above theorem also show there is no shortage of examples of where this does happen for significantly non-spherical topographies.

\begin{theorem} There exists an uncountably infinite dimensional and separable Banach submanifold $M\subset C^{0,+}_{cc}(\mathbb R^3)$ such that for all $f\in M$, $\partial K_f$ is homeomorphic to $S^2$ but not radially symmetric, and for which $SHE(V_f)$ is extendable down to the topography of $K_f$.
\end{theorem}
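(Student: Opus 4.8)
The plan is to make Theorem~2 concrete by exhibiting $M$ as a family of confocally stratified ellipsoids, using classical ellipsoidal potential theory to pin down the radius of convergence.

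\textbf{Reduction.} First I would translate ``$SHE(V_f)$ is extendable down to the topography'' into the language of $R_c$. By the convergence statement recorded before the theorem (from \cite{ccob}), $SHE(V_f)$ converges uniformly exactly on $\{|{\bf x}| > R_c(f)\}$, and $R_c(f)$ is the radius of the smallest origin-centered ball outside of which the exterior potential $V_f$ admits a harmonic continuation. Writing $r_{\min}(f) := \min\{|{\bf x}| : {\bf x}\in T(K_f)\}$ for the least distance from the origin to the topography, the free space bounded by $S^2_{R(f)}$ and $T(K_f)$ is contained in $\{|{\bf x}| > r_{\min}(f)\}$; hence $R_c(f) \le r_{\min}(f)$ already forces $SHE(V_f)$ to converge throughout the free space, i.e.\ down to the topography. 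So it suffices to produce a large family of non-negative continuous densities with non-radially-symmetric $S^2$-topography whose exterior potentials continue harmonically strictly below their inner radius.

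\textbf{The family.} Fix a mildly aspherical ellipsoid $E$ with semi-axes $a_1 > a_2 > a_3$ all close to $1$, so that $\partial E \cong S^2$ is not a round sphere, and so that its focal ellipse $F = \{x_3 = 0,\ x_1^2/(a_1^2-a_3^2) + x_2^2/(a_2^2-a_3^2) = 1\}$ has circumscribed radius $r_F := \sqrt{a_1^2 - a_3^2}$ satisfying $r_F < a_3 = r_{\min}(E)$; this is automatic for small asphericity since $r_F \to 0$ as $E$ degenerates to a sphere. Let $\xi$ be the confocal (ellipsoidal) coordinate on $\overline{E}$, and consider densities $f_\psi = \psi \circ \xi$ (extended by $0$ outside $E$), where $\psi$ is a continuous positive profile on the range of $\xi$ vanishing at the outer endpoint (so that $f_\psi$ is continuous across $\partial E$). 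The key input is the classical theory of ellipsoidal potentials: by Ivory's theorem and stratification into confocal homeoids, the exterior potential of \emph{any} confocally stratified body continues harmonically to $\mathbb{R}^3 \setminus F$ with singular set exactly the common focal ellipse $F$, \emph{independently of the radial profile} $\psi$. Consequently $R_c(f_\psi) = r_F < r_{\min}(E)$ for every admissible $\psi$, so by the Reduction each $f_\psi$ has $SHE(V_{f_\psi})$ extending to the topography; moreover $f_\psi \in C^{0,+}_{cc}(\mathbb{R}^3)$ with $\partial K_{f_\psi} = \partial E \cong S^2$ non-radially symmetric and interior containing the origin.

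\textbf{Manifold structure.} Since $\xi$ is surjective onto its range, $\Phi : \psi \mapsto \psi\circ\xi$ is an isometry for the sup norm, hence a linear embedding of the profile space into $C^0_c(\mathbb{R}^3)$ with closed image. The admissible profiles form an open subset $\mathcal V$ of a separable, infinite-dimensional Banach space $X_0$ of profiles (the continuous functions vanishing at the outer endpoint, re-normed with a weight near that endpoint so that ``positive on the interior and vanishing at the boundary'' becomes an open, hence chartable, condition). Then $M := \Phi(\mathcal V)$ is a separable, uncountably infinite-dimensional Banach submanifold of $C^{0,+}_{cc}(\mathbb{R}^3)$, and every $f \in M$ has the asserted properties.

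\textbf{Main obstacle.} The crux is the exact localization of the singular set of the harmonic continuation: I must invoke (or re-derive from the closed form of the ellipsoidal integral) that a confocally stratified density has exterior potential harmonic precisely off the focal ellipse, together with the quantitative inequality $r_F < r_{\min}(E)$ for mildly aspherical $E$. The only subsidiary technical point is engineering the profile space so that non-negativity and boundary-vanishing genuinely cut out an \emph{open} set in a Banach space (the boundary weighting above), which simultaneously guarantees $f_\psi \ge 0$ and $K_{f_\psi} = \overline{E}$ across an entire chart; this is routine once the potential-theoretic input is secured.
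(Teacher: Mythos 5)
Your proposal is correct in outline, but it takes a genuinely different route from the paper. The paper's construction is far more elementary: it takes $\lambda=\lambda_1+\lambda_2$ where $\lambda_i$ is a $(3/2)$-smoothing of a point mass at $(\pm 1,0,0)$, so that $K_\lambda$ is a union of two overlapping balls with topography homeomorphic to $S^2$; by Newton's theorem the exterior potential of $\lambda$ equals that of the two-point-mass array $S$, whose SHE converges outside radius $1$, while the topography lies entirely outside radius $\sqrt{5/4}>1$; uniqueness of harmonic continuation then gives extendability down to the topography, and the Banach manifold is the product $Sm(3/2,{\bf x}_1)\times Sm(3/2,{\bf x}_2)$ of radial-profile spaces. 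Your ellipsoidal family replaces Newton's theorem by two classical but substantially deeper inputs: the Herglotz--Ivory continuation theorem (the exterior potential of a homogeneous ellipsoid continues harmonically past $\partial E$, with singular set confined to the focal region) and Maclaurin's theorem (confocal homogeneous ellipsoids have proportional exterior potentials), which is what actually delivers profile-independence. What your route buys is a connected, real-analytic, genuinely ellipsoidal topography (the paper's planet is a non-smooth ``peanut''), and a mechanism in which the obstruction to descent is visibly a focal singularity strictly inside the body; what it costs is the reliance on nontrivial ellipsoidal potential theory where the paper needs only Newton's theorem plus the point-mass expansion.

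Three caveats. (1) Your key potential-theoretic claim is misstated: the \emph{single-valued} harmonic continuation of the exterior potential extends to the complement of the closed focal \emph{disk} spanned by $F$, not to $\mathbb{R}^3\setminus F$; the continuation is branched around the curve $F$, and its normal derivative jumps across the open disk (this is visible in the $\sqrt{a_3^2+\kappa}$ term of the ellipsoidal integral). This does not break your argument, because the closed focal disk lies inside $\overline{B}_{r_F}({\bf 0})$ and you only need a single-valued continuation on $\{|{\bf x}|>r_F\}$, which is disjoint from the disk; but the claim should be stated as containment of the singular set in the disk, not as ``singular set exactly $F$.'' (2) For merely continuous profiles $\psi$ the ``stratification into confocal shells'' is a Stieltjes decomposition that is not directly available (it needs $\psi$ of bounded variation); the clean fix is to apply the coarea formula together with Maclaurin's theorem to conclude that, outside $E$, $V_{f_\psi}$ is exactly a constant multiple (the mass ratio) of the homogeneous-ellipsoid potential, reducing everything to the homogeneous case. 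Also, the shells in question are focaloids, not homeoids. (3) Your observation that positivity-with-boundary-vanishing is not an open condition in the sup norm is well taken, and your weighted-norm repair is legitimate (at the price that $M$ then only captures profiles comparable to the chosen weight); interestingly, the paper itself glosses over this exact point when it asserts $C_1^+[0,a]$ is open in $C_1[0,a]$, so on this score your write-up is the more careful of the two.
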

\vskip.5in


\section*{Proof of Theorem 1} Fix a particular choice of $\mu_{p,w}$-metric on $C_{cc}^{0,+}(\mathbb R^3)$.

\begin{claim} Given $\varepsilon > 0$ there exists a $\delta > 0$ such that $\mu_{p,w}(f,f') < \delta$ implies $\left| V_f(_-) - V_{f'}(_-)\right|_{N,p,w}\} < \varepsilon$
\end{claim}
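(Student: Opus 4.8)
The plan is to exploit the linearity of the Newtonian potential operator together with the local integrability of its kernel in dimension three, reducing the claim to a single application of Young's convolution inequality. Recall that the potential is the convolution $V_f = \Phi_0 * f$ with $\Phi_0({\bf z}) = G\,|{\bf z}|\inv$ (the sign convention is immaterial here), so $f\mapsto V_f$ is linear and $V_f - V_{f'} = V_{f-f'}$. Setting $g := f-f'$, it therefore suffices to produce a constant $C = C(N,p,w)$, independent of $g$, with $\|V_g\|_{N,p,w}\le C\,\|g\|_{p,w}$; the claim then follows with $\delta := \varepsilon/C$. Because this claim is invoked inside the proof of Theorem~1, where $f$ is fixed and $f'$ ranges over small perturbations, I may assume at the outset that $supp(f)$ and $supp(f')$ lie in one common ball $\ov{B}_M({\bf 0})$, so that $supp(g)\subset \ov{B}_M({\bf 0})$; this is precisely what will make $C$ uniform.

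Next I would strip away the weight and the truncation by a compactness argument. The integrals defining $\|V_g\|_{N,p,w}$ live on $\ov{B}_N({\bf 0})$ and those defining $\|g\|_{p,w}$ live on $\ov{B}_M({\bf 0})$. Since $w$ is continuous and strictly positive, on each of these compact sets it is bounded above and bounded below away from $0$; hence the weighted seminorm $\|\cdot\|_{N,p,w}$ and norm $\|\cdot\|_{p,w}$ are each comparable to their unweighted counterparts $\|\cdot\|_{L^p(\ov{B}_N)}$ and $\|\cdot\|_{L^p(\ov{B}_M)}$, up to factors depending only on $\max_{\ov{B}_N}w$ and $\min_{\ov{B}_M}w$. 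It thus remains to prove the purely unweighted estimate $\|V_g\|_{L^p(\ov{B}_N)}\le C'\,\|g\|_{L^p(\ov{B}_M)}$.

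The heart of the argument is the convolution estimate. For ${\bf x}\in\ov{B}_N({\bf 0})$ and ${\bf y}\in \ov{B}_M({\bf 0})$ one has $|{\bf x}-{\bf y}|\le N+M$, so replacing $\Phi_0$ by its truncation $\Phi({\bf z}) := G\,|{\bf z}|\inv\cdot\chi_{\{|{\bf z}|\le N+M\}}$ costs nothing: $V_g({\bf x}) = (\Phi * g)({\bf x})$ for every ${\bf x}\in \ov{B}_N({\bf 0})$. The key point is that, although $|{\bf z}|\inv$ fails to be globally integrable on $\mathbb R^3$, it is locally integrable, with $\int_{|{\bf z}|\le \rho}|{\bf z}|\inv\,d{\bf z} = 2\pi\rho^2$; hence $\Phi\in L^1(\mathbb R^3)$ and $\|\Phi\|_1 = 2\pi G (N+M)^2$. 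Young's inequality ($L^1 * L^p\to L^p$, valid for every $1\le p\le\infty$) then gives
\[
\|V_g\|_{L^p(\ov{B}_N)}\le \|\Phi * g\|_{L^p(\mathbb R^3)}\le \|\Phi\|_1\,\|g\|_{L^p(\mathbb R^3)} = 2\pi G(N+M)^2\,\|g\|_{L^p(\ov{B}_M)},
\]
which is the desired unweighted bound; combining it with the weight comparison of the previous step yields $C$ and finishes the proof.

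I expect the remaining obstacles to be bookkeeping rather than conceptual. The integrable singularity of the kernel at ${\bf z}={\bf 0}$ is exactly what dimension three rescues, the Jacobian factor $r^2$ defeating the $r\inv$ pole; and the endpoint $p=\infty$ is handled by the same inequality read as $\|\Phi * g\|_\infty\le\|\Phi\|_1\,\|g\|_\infty$. The single point that genuinely requires care is the uniformity of $C$: it depends on the common support radius $M$, so the reduction to a fixed ball $\ov{B}_M({\bf 0})$ must be justified from the standing hypotheses on $f$ and the admissible perturbations $f'$, not merely from the smallness of $\mu_{p,w}(f,f')$.
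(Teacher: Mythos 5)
Your proposal is correct, and its key step is genuinely different from the paper's. The paper also starts (implicitly) from linearity, $V_f - V_{f'} = V_{f-f'}$, but then obtains the bound by \emph{domination}: it compares $V_{f-f'}$ with the potential $V_C$ of the constant density $C := \|f-f'\|_{p,w}$ placed on $\ov{B}_N({\bf 0})$, asserts $\|V_f - V_{f'}\|_{N,p,w}\le \|V_C\|_{N,p,w}$ ``by the triangle inequality,'' and finishes by homogeneity, $\|V_C\|_{N,p,w} = C\,\|V_1\|_{N,p,w} = C\,D$, so that $\delta = \varepsilon/D$ works. That domination step is really a positivity/monotonicity argument --- if $|g|\le C$ \emph{pointwise} then $|V_g|\le V_C$ pointwise, the Newton kernel being positive --- and it is only literally available when the norm controls the function pointwise, i.e.\ essentially for $p=\infty$ with trivial weight; for $p<\infty$ a function of small $L^{p,w}$-norm can be pointwise enormous, so the paper's inequality as stated needs a further argument (e.g.\ a Fubini/duality estimate). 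Your route via Young's inequality avoids exactly this defect: truncating the kernel to $\Phi = G|{\bf z}|\inv\chi_{\{|{\bf z}|\le N+M\}}\in L^1$ and using $L^1 * L^p\to L^p$ gives the operator bound $\|V_g\|_{L^p(\ov{B}_N({\bf 0}))}\le 2\pi G(N+M)^2\|g\|_{L^p(\ov{B}_M({\bf 0}))}$ uniformly in $1\le p\le\infty$, and the compactness comparison of weighted and unweighted norms is the standard, correct way to return to $\mu_{p,w}$. What the paper's route buys is brevity and a ``physical'' constant $D = \|V_1\|_{N,p,w}$; what yours buys is an argument that actually works for every $p$ and every weight. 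Finally, the support caveat you flag is a real issue that the paper passes over in silence: smallness of $\mu_{p,w}(f,f')$ alone does not constrain where $f'$ lives, and for a weight $w$ decaying at infinity the claim as literally quantified is false (a large mass far away can have tiny weighted norm yet contribute appreciably to the potential on $\ov{B}_N({\bf 0})$). Your resolution --- importing from the context of Theorem 1 that the perturbation $f'=\lambda$ has support $\varepsilon$-close to $K_f$, hence inside a common ball $\ov{B}_M({\bf 0})$ --- is exactly the justification needed, and making it explicit strengthens rather than weakens the proof.
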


\begin{proof} Taking the constant function $C := |f - f'|_{p,w}$ on $\ov{B}_N({\bf 0})$ we have
\[
 \left| V_f(_-) - V_{f'}(_-)\right|_{N,p,w} \le \left| V_{C}(_-)\right|_{N,p,w}
\]
by the triangle inequality. By the above, it suffices to show there exists $\delta$ such that $C = |f-f'|_{p,w} < \delta$ implies $\left| V_{\psi}(_-)\right|_{N,p,w} < \varepsilon$. But
\[
\left| V_{C}(_-)\right|_{N,p,w} = C\left| V_{1}(_-)\right|_{N,p,w}
\]
The quantity $D = \left| V_{1}(_-)\right|_{N,p,w} > 0$ (with $1$ denoting the constant function) is a constant independent of $f,f'$. Taking $C = \delta/D$ then completes the proof of the claim.
\end{proof}
\vskip.2in

It remains to show that we may find an $f'$ with $\mu_{p,w}(f,f') < \delta$ for which $SHE(V_{f'})$ does not $\varepsilon$-descend. To do this, we will need to introduce a certain construction derived from point-masses. First some notation. The pair $({\bf x},m)$ will denote the point-mass of mass $m$ (in the appropriate units) located at position ${\bf x}\in\mathbb R^3$. 

\begin{definition} For $r > 0$, an  r-smoothing of $({\bf x},m)$ is defined to be a continuous mass-density function $\lambda$ which is radially symmetric about the point $\bf x$, for which $\|\lambda\|_1 = m$ and $supp(\lambda) = \ov{B}_r({\bf x})$.
\end{definition}

The point of this construction is clear. The conditions are simply a mathematical way of saying that $\lambda$ is the mass-density function with support the closed ball of radius $r$ about $\bf x$ which has total mass $m$, and where the mass of the closed $r$-ball is distributed in a continuous and spherically symmetric manner with respect to the center $\bf x$. We will say $\lambda$ is a {\it smoothed point-mass} (SPM) if it is an $r$-smoothing of some point-mass in the above sense. Finally, we will say that $\lambda$ is a {\it smoothed point-mass array} (SPMA) if $\lambda$ can be written as a finite sum
\[
\lambda = \sum_{i=1}^m\lambda_i
\]
where $\lambda_i$ is an SPM for each $i$.
\vskip.2in

\begin{claim} For any $f\in C_{cc}^{0,+}(\mathbb R^3), 1\le p\le\infty$, weight function $w:\mathbb R^3\to\mathbb R$, and $\delta, \varepsilon > 0$ as in the previous Claim, there is an SPMA $\lambda\in C^{0,+}_{cc}(\mathbb R^3)$ with $\mu_{p,w}(f,\lambda) < \delta$ for which
\[
\max\{d(K_f,K_{\lambda}), d(\partial K_f,\partial K_{\lambda}), d(S^2_f,S^2_{\lambda})\} < \varepsilon
\]
and where $SHE(V_{\lambda})$ does not $\varepsilon$-descend.
\end{claim}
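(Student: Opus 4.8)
The plan is to reduce the claim to two essentially independent ingredients: a \emph{geometric approximation} statement, that $f$ can be approximated in $\mu_{p,w}$ by an SPMA whose support, boundary, and Brillouin sphere are all $\varepsilon$-close to those of $f$ and \emph{all} of whose constituent smoothing radii are smaller than $\varepsilon$; and a \emph{convergence-radius bound}, that for any SPMA $\lambda=\sum_{i=1}^k\lambda_i$, with $\lambda_i$ an $r_i$-smoothing of a point mass $({\bf x}_i,m_i)$, one has $R_c(\lambda)\ge\max_i|{\bf x}_i|$. Granting both, non-descent is immediate: if $j$ realizes $R(\lambda)=\max_i(|{\bf x}_i|+r_i)=|{\bf x}_j|+r_j$, then
\[
R_c(\lambda)\ \ge\ \max_i|{\bf x}_i|\ \ge\ |{\bf x}_j|\ =\ R(\lambda)-r_j\ >\ R(\lambda)-\varepsilon,
\]
so by the functional criterion $SHE(V_\lambda)$ does not $\varepsilon$-descend, while the remaining closeness assertions are supplied by the approximation statement.

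I would establish the convergence-radius bound first, since it is the conceptual core and explains why SPMAs are the right gadget. For a single SPM $\lambda_i$, spherical symmetry about ${\bf x}_i$ together with the shell theorem gives that on the complement of $\ov B_{r_i}({\bf x}_i)$ the potential $V_{\lambda_i}$ agrees identically with the point-mass potential $-Gm_i/|{\bf x}-{\bf x}_i|$; equivalently, the spherically symmetric part of $\lambda_i$ contributes only to the trace terms annihilated by the spherical-harmonic projection, so the coefficients $C_{n,m}$ of $\lambda_i$ coincide with those of the point mass at ${\bf x}_i$. Hence the inward analytic continuation of the exterior field of $\lambda_i$ is exactly $-Gm_i/|{\bf x}-{\bf x}_i|$, whose unique singularity sits at ${\bf x}_i$, at distance $|{\bf x}_i|$ from the origin. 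By linearity $SHE(V_\lambda)=\sum_i SHE(V_{\lambda_i})$, and the continuation of the exterior field of $\lambda$ is $\sum_i(-Gm_i/|{\bf x}-{\bf x}_i|)$, which has genuine, non-cancelling singularities at each center ${\bf x}_i$ (near ${\bf x}_i$ every other summand is smooth and $m_i>0$). A spherical-harmonic series cannot converge uniformly on any region $\mathbb R^3\backslash\ov B_r({\bf 0})$ containing a singularity of the function it continues, so convergence fails for every $r<\max_i|{\bf x}_i|$, giving $R_c(\lambda)\ge\max_i|{\bf x}_i|$.

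It then remains to prove the geometric approximation statement, which is where the real work lies. I would fix a partition of $K_f$ into cells of diameter $\eta\ll\min(\delta,\varepsilon)$, place in each cell an SPM with smoothing radius below $\varepsilon$, total mass matching $\int_{\mathrm{cell}}f$, and radial profile within $\eta$ of the (nearly constant) value of $f$ on that cell except in a thin collar tapering to $0$ at the ball's boundary, and arrange the balls to overlap into a connected union covering the origin. As $\eta\to0$ the collars have total volume tending to $0$, so $\|f-\lambda\|_p\to0$ and hence $\mu_{p,w}(f,\lambda)\to0$ (using that $w$ is continuous, hence bounded, on the fixed compact region); simultaneously $K_\lambda=\bigcup_i\ov B_{r_i}({\bf x}_i)$ converges to $K_f$ in Hausdorff distance, forcing $d(K_f,K_\lambda)$, $d(\partial K_f,\partial K_\lambda)$, and $d(S^2_f,S^2_\lambda)=|R(f)-R(\lambda)|$ all to $0$; choosing $\eta$ small enough yields every stated inequality.

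The main obstacle is precisely this last step: reconciling the $\mu_{p,w}$-approximation with the three Hausdorff conditions \emph{and} with membership in $C^{0,+}_{cc}(\mathbb R^3)$ --- keeping the support connected and its boundary a finite disjoint union of $2$-manifolds --- using only rigidly radial balls, when the topography $\partial K_f$ is merely a topological (non-smooth) submanifold. Controlling $\partial K_\lambda$, the boundary of a union of balls, so that it Hausdorff-approximates $\partial K_f$ near the Brillouin-contact region (where one also needs $R(\lambda)\approx R(f)$), and perturbing the centers slightly to remove tangencies so that $\partial K_\lambda$ is genuinely a manifold, is the delicate part; the convergence-radius bound above, by contrast, is insensitive to these geometric refinements.
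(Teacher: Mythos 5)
Your geometric approximation step is essentially the paper's own construction (a spherical filling of $K_f$ by small balls, mass-matched radial profiles on each ball, extra balls to cover the interstices, small perturbations to control the boundary), and your convergence-radius bound $R_c(\lambda)\ge\max_i|\mathbf{x}_i|$ is correct as stated: uniform convergence of $SHE(V_\lambda)$ on $\mathbb R^3\backslash\ov{B}_r(\mathbf{0})$ for some $r<\max_i|\mathbf{x}_i|$ would yield a harmonic, hence locally bounded, limit that agrees with $\sum_i(-Gm_i/|\mathbf{x}-\mathbf{x}_i|)$ away from the centers by unique continuation, contradicting the blow-up at the outermost center. This part of your argument is even more robust than the paper's: it needs no general-position hypothesis (the paper perturbs the centers so that there is a \emph{unique} extremal one), and it handles all centers at once.

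The genuine gap is in the final inference: ``$R_c(\lambda)>R(\lambda)-\varepsilon$, so by the functional criterion $SHE(V_\lambda)$ does not $\varepsilon$-descend.'' Non-$\varepsilon$-descent is defined by failure of uniform convergence in the \emph{free space} between $T(K_\lambda)$ and $S^2_{R(\lambda)}$, whereas your bound only rules out uniform convergence on full exterior regions $\mathbb R^3\backslash\ov{B}_r(\mathbf{0})$, which contain points inside the body; indeed, the only points at which your argument detects any trouble are the centers $\mathbf{x}_i$, all of which lie \emph{inside} $K_\lambda$. The bridge you invoke is the reverse direction of the paper's asserted equivalence ``$f\in C^{0,+}_{cc}(\mathbb R^3,\varepsilon)$ iff $R_c(f)\le R(f)-\varepsilon$,'' i.e.\ that free-space uniform convergence forces uniform convergence on the whole exterior shell. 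That direction is nowhere proved in the paper and is not a formal consequence of the definitions: Laplace series can have genuinely direction-dependent convergence regions (for instance $\sum_n 2^n\sin^n\theta\cos(n\phi)\,r^{-n-1}$ converges locally uniformly on the region $r>2\sin\theta$, which dips far inside its sphere of convergence $r=2$ near the poles), and a singularity of the harmonic continuation sitting inside the body is perfectly compatible with free-space convergence all the way down to the topography --- that is exactly the situation exploited in the paper's Theorem 2. So a contradiction argument routed through interior singularities cannot, by itself, preclude $\varepsilon$-descent. The paper closes this hole differently: after arranging a unique extremal center $\mathbf{x}_1$, it forms the shell $Sh$ between the Brillouin radii of $S_1=S\backslash\{(\mathbf{x}_1,m_1)\}$ and of $S$, observes that $SHE(V_{S_1})$ converges in $Sh$, and subtracts term-by-term: convergence of $SHE(V_S)=SHE(V_\lambda)$ at any point of $Sh$ would force the single point-mass series $SHE(V_{(\mathbf{x}_1,m_1)})$ to converge at a point on or below its own Brillouin sphere, which is false. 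This yields \emph{pointwise} divergence at every point of $Sh$, in particular at free-space points there, which is what the definition of non-$\varepsilon$-descent actually requires. To repair your proof, replace the appeal to the functional criterion by this subtraction argument; your construction already contains everything needed once you add the general-position perturbation of the centers.
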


\begin{proof} As the $L^{p,w}$-norm is dominated by the $L^{1,w}$-norm for all $p > 1$, it suffices to consider the case $p=1$. We can further reduce to the case $w = 1$ (the constant function $1$) by observing that
\[
\mu_{1,w}(f,g) = \mu_1(w\cdot f,w\cdot g)
\]
where $\mu_1$ is the standard unweighted $L^1$-metric. With respect to this metric, we claim there exists an SPMA $\lambda = \sum_{i=1}^m \lambda_i$ (where, for each $i$, $\lambda_i$ is an $r_i$-smoothing of a point-mass $({\bf x}_i,m_i)$) with the following properties
\begin{enumerate}
\item[p1)] $\lambda\in C_{cc}^{0,+}(\mathbb R^3)$;
\item[p2)] $\{B_{r_i}({\bf x}_i)\}_{i=1}^m$ is an open covering of $K_f$;
\item[p3)] $\mu_{1}(f,\lambda) < \delta$;
\item[p4)] $d(K_f,K_{\lambda}) < \varepsilon$;
\item[p5)] $d(\partial K_f,\partial K_{\lambda}) < \varepsilon$;
\item[p6)] $d(S^2_f,S^2_{\lambda}) < \varepsilon$;
\item[p7)] the centers ${\bf x}_i$ are in general position with respect to distance from the origin: $\|{\bf x}_i\|\ne \|{\bf x}_j\|$ for all $i\ne j$.
\end{enumerate}
The construction is in stages. A {\it spherical filling} of $K_f$ will refer to a collection of closed balls ${\cal F} := \{\ov{B}_{r_i}({\bf y_i})\}$ with disjoint interiors and where $\ov{B}_{r_i}({\bf y_i})\subset K_f$ for each $i$. Given such a filling $\cal F$, let $K_{\cal F} = \bigcup_i \ov{B}_{r_i}({\bf y_i})\subset K_f$. As $K_f$ is compact with boundary a closed, compact $C^0$ submanifold of $\mathbb R^3$, we can choose a finite spherical filling ${\cal F} = \{\ov{B}_{r_i}({\bf x_i})\}_{i=1}^M$ of $K_f$ for which
\begin{enumerate}
\item[a1)] $\int_{K_f\backslash K_{\cal F}} |f({\bf x})|\ d{\bf x} < \min\{\delta,\varepsilon\}/10$;
\item[a2)] $var(f,\ov{B}_{r_i}({\bf y_i})) < \min\{\delta,\varepsilon\}/(10|K_f|)$;
\end{enumerate}
To this filling we then add a finite collection of balls $\{B_{r_i}({\bf x}_i)\}_{i = M+1}^{M'}$ covering the interstices of the filling, so that
\begin{enumerate}
\item[a3)] $\{B_{r_i}({\bf x}_i)\}_{i=1}^{M'}$ is an open covering of $K_f$;
\item[a4)] setting $K' = \bigcup_{i=1}^{M'}\ov{B}_{r_i}({\bf x}_i)$, $\partial K'$ is given as in (\ref{eqn:boundary}), $d(K_f,K') < \varepsilon$, and $d(\partial K_f,\partial K') < \varepsilon$;
\item[a5)] $d(S^2_f,S^2_{K'}) < \varepsilon$ where $S^2_{K'}$ denotes the Brillouin sphere of the compact region $K'$.
\end{enumerate}
Now for each $1\le i\le M'$ we choose $\lambda_i\in C_{cc}^{0,+}(\mathbb R^3)$ with
\begin{enumerate}
\item[a6)] $supp(\lambda_i) = \ov{B}_{r_i}({\bf x}_i)$;
\item[a7)] for $1\le i\le M$, $\lambda_i$ is radially symmetric about ${\bf x}_i$ and $\mu_1(f_i,\lambda_i) < var(f,\ov{B}_{r_i}({\bf x}_i))$ where $f_i := f|_{\ov{B}_{r_i}({\bf x}_i)}$;
\item[a8)] for $(M+1)\le i\le M'$, $\lambda_i$ is radially symmetric about ${\bf x}_i$ and $var(\lambda_i,\ov{B}_{r_i}({\bf x}_i)) < mean(f,\ov{B}_{r_i}({\bf x}_i))$.
\end{enumerate}
Setting $m_i = \|\lambda_i\|_1 = \int_{\ov{B}_{r_i}({\bf x}_i)}\lambda_i({\bf x})\ d{\bf x}$, we have that $\lambda_i$ is an $r_i$-smoothing of the point-mass $({\bf x}_i,m_i)$ for $1\le i\le M'$. Setting $\lambda = \sum_{i=1}^{M'}\lambda_i$, a1) - a8) imply $\lambda$ satisfies properties p1) - p6) above. The final property p7) is then achieved by a suitably small perturbation of the array of center points $\{{\bf x}_i\}$.
\vskip.2in
Relabeling as needed, we can assume ${\bf x}_1$ is the point in the array maximally distant from the origin. Again, by increasing the  number of balls in our covering if necessary, we can further arrange that $r_1$ - the radius of the $supp(\lambda_1) = \ov{B}_{r_1}({\bf x}_1)$ - is also less than $\varepsilon/2$.
\vskip.2in

By Newton's Theorem, the gravitational potential of $\lambda$ is exactly equal to the potential associated to the finite array of point-masses $S := \{({\bf x}_i, m_i)\}$ everywhere on and outside of the Brillouin sphere $S^2_\lambda$. As the set $S$ of point-masses contains a unique extremal point, the spherical harmonic expansion $SHE(V_S)$ of the gravitational potential associated to the suite of point-masses $S$ is not $\alpha$-extendable for any $\alpha > 0$ below the Brillouin sphere for $S$. To see why this is so, let $S_1 = S\backslash \{({\bf x}_1,m_1)\}$. Let $R$ be the Brillouin radius of $S$, and $R_1$ the Brillouin radius of $S_1$. Let $\delta = R-R_1 > 0$ and set $Sh = \ov{B}_R({\bf 0})\backslash \ov{B}_{R_1}({\bf 0})$; this is a spherical shell centered at $\bf 0$ of thickness $\delta$. $Sh$ is a subset of the free space above the Brillouin sphere of $S_1$, and $SHE(V_{S_1})$ converges absolutely in this region. If there were a point ${\bf y}\in Sh$ where $SHE(V_S)$ converged, then by superposition this would imply
\[
SHE(V_{({\bf x_1},m_1)}) = SHE(V_S) - SHE(V_{S_1})
\]
also converges at this point. But it is well-known that the SHE for a single point-mass does not converge either on or below the Brillouin sphere for the point-mass, leading to a contradiction. It follows, then, that $SHE(V_S)$ cannot coverge anywhere in the region $Sh$.
\vskip.2in

Now $SHE(V_S) = SHE(V_\lambda)$ on and above $S^2_{\lambda}$, and as analytic extensions of harmonic functions are unique, $SHE(V_\lambda)$ is at most extendable down to the Brillouin sphere $S^2_S$ (a distance less than $\varepsilon/2$) but not beyond, as any greater distance would contradict the fact just shown that $SHE(V_S)$ is not extendable on or below its Brillouin sphere. More precisely, $SHE(V_\lambda)$ cannot converge at any point in the free space above the topography $T(K_\lambda)$ that lies in the shell $Sh$ defined above.
\end{proof}

Taking $f_\varepsilon = \lambda$ then completes the proof of Theorem 1.
\newpage


\section*{Proof of Theorem 2} We will construct an SPMA satisfying the properties described in the statement of Theorem 2. Let ${\bf x}_1 = \langle 1,0,0\rangle, {\bf x}_2 = \langle -1,0,0\rangle$. Fix values $m_1,  m_2 > 0$. Next, let $\lambda_i$ be an $r_i$-smoothing of $({\bf x}_i,m_i); i=1,2$ where $r_i = r_i(\gamma) = 1 + \gamma$ for $\gamma > 0$. Let $\lambda = \lambda_1 + \lambda_2$. 
\vskip.2in

We note first that as $\gamma > 0$, $K_\lambda$ satisfies the conditions necessary for $\lambda$ to be an element of $C_{cc}^{0,+}(\mathbb R^3)$. In particular, $\partial K_\lambda$ is homeomorphic to $S^2$, and equal to $T(K_\lambda)$. Set $S = \{({\bf x}_1,m_1), ({\bf x}_2,m_2)\}$. As we have seen previously, $SHE(V_\lambda) = SHE(V_S)$ on and beyond the Brillouin sphere $S_\lambda$, which by construction has radius $(2+\gamma)$. As the Brillouin sphere $B_M$ (see below) of the 2-point-mass array $S$ has radius $1$, we already see that this setup provides a simple example of an SPMA $\lambda$ for which $SHE_{V_\lambda}$ is $(1+\gamma)$-descendable.

\begin{figure}[H]\begin{center}
\includegraphics[scale=0.5]{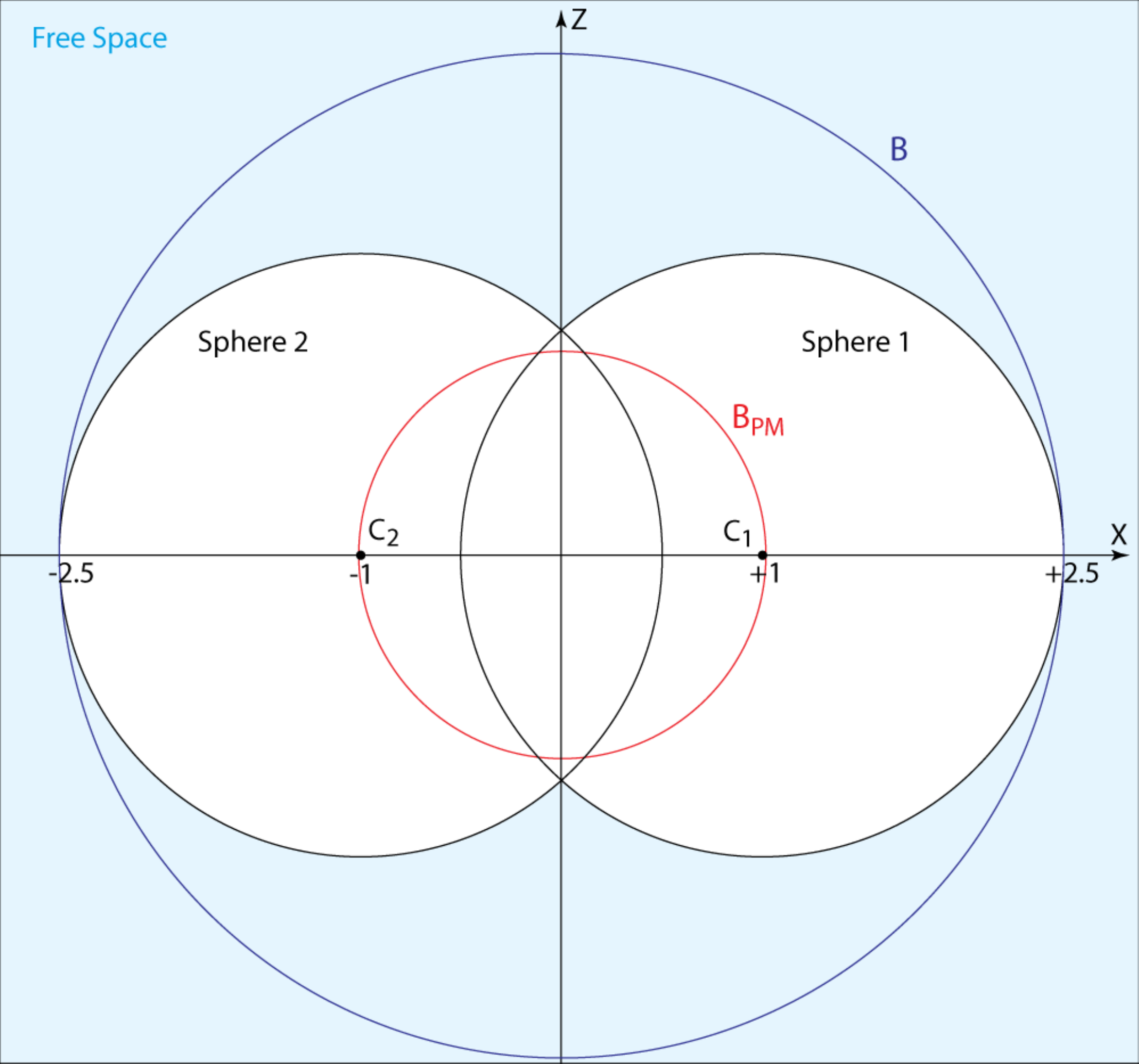}\end{center}
\caption{The Brillouin sphere $B_{PM}$ for the two-point mass array has radius 1, while the Brillouin sphere $B$ for the SPMA has radius 2.5. The topography $T(K_\lambda)$ formed as the union of the two spheres is closest to the origin at their common intersection in the $yz$-plane, which constitutes a circle centered at the origin of radius $\sqrt{5/4} > 1$, and which is therefore exterior to $B_{PM}$.}
\end{figure}
The intersection of $\partial K_\lambda$ with the $yz$-plane is a circle centered at the origin with radius $r(\gamma) = \sqrt{(1+\gamma)^2 - 1}$. This radius grows monotonically with $\gamma$, and $r(\gamma) > 1$ for $\gamma > \sqrt{2} - 1\approx 0.4142$. In particular, taking $\gamma = 1/2$ produces an SPMA $\lambda$ for which the entire topography $T(K_\lambda)$ lies in the exterior of the Brillouin sphere for $S$. It follows from the above discussion that $SHE(V_\lambda)$ is extendable all the way down to $T(K_\lambda)$.
\vskip.2in

We observe that the above argument is independent of the choice of smoothings of $(x_i, {\bf m}_i)$, as well as the choice of positive values for ${\bf m}_i$. For $a > 0$ let $C_1[0,a] = \{f\in C[0,a]\ |\ f(1) = 0\}$, $C^+[0,a] = \{f\in C[0,a]\ |\ f(x) > 0, 0\le x < a\}$, and $C_1^+[0,a] = C_1[0,a]\cap C^+[0,a]$. For $\mu$ the standard Lebesque measure on $\mathbb R^3$ and $f\in C_1^+[0,a]$, the product $\mu_f$ given by $\mu_f({\bf x}) = f(|{\bf x}|)\mu({\bf x})$ defines a radially symmetric measure on $B_a({\bf 0})$ and thus an $a$-smoothing of the point-mass $({\bf 0},m_f)$ where $m_f = \displaystyle\int_{B_a({\bf 0})} f({\bf x})\mu({\bf x})$. Via translation this gives an $a$-smoothing of the point-mass $({\bf b},m_f)$ for any ${\bf b}\in\mathbb R^3$, or simply an $a$-smoothing centered at $\bf b$. Denoting by $Sm(a,{\bf b})$ the set of all $a$-smoothings centered at ${\bf b}$ (of arbitrary positive mass). The above construction yields a canonical isomorphism
\[
Sm(a,{\bf b})\cong C_1^+[0,a]
\]
The space $C_1[0,a]$ is an infinite-dimensional Banach subspace of the Banach space $C[0,a]$, with $C_1^+[0,a]$ open in $C_1[0,a]$. In this way $Sm(a,{\bf b})$ inherits the structure of an infinite-dimensional, separable Banach manifold via the above isomorphism. The product Banach manifold
\[
S = Sm(3/2,{\bf x}_1)\times Sm(3/2,{\bf x}_2)
\]
is then the parameter space of SPMAs with topography represented by the above configuration, in which the exterior SHE will always converge all the way down to the topography.
\vskip.5in

\end{document}